\DeclareMathOperator*{\argmin}{arg\,min}
\newcommand{\euler}{\mathrm{e}}
\newcommand{\imgn}{\mathrm{j}}
\def\BibTeX{{\rm B\kern-.05em{\sc i\kern-.025em b}\kern-.08em
    T\kern-.1667em\lower.7ex\hbox{E}\kern-.125emX}}
\begin{document}

\title{Comparison of 2D Regular Lattices \\for the CPWL Approximation of Functions
\thanks{This work was supported by the European Research Council (ERC Project FunLearn) under Grant 101020573, in part by the Swiss National Science Foundation, Grant $200020\_219356$.}
}

\author{\IEEEauthorblockN{1\textsuperscript{st} Mehrsa Pourya}
\IEEEauthorblockA{
\textit{EPFL}\\
Lausanne, Switzerland \\
mehrsa.pourya@epfl.ch}
\and
\IEEEauthorblockN{2\textsuperscript{nd} Maïka Nogarotto}
\IEEEauthorblockA{
\textit{EPFL}\\
Lausanne, Switzerland \\
maika.nogarotto@epfl.ch}
\and
\IEEEauthorblockN{3\textsuperscript{rd} Michael Unser}
\IEEEauthorblockA{
\textit{EPFL}\\
Lausanne, Switzerland \\
michael.unser@epfl.ch}
}

\maketitle
\begin{abstract}
We investigate the approximation error of functions with continuous and piecewise-linear (CPWL) representations. We focus on the CPWL search spaces generated by translates of box splines on two-dimensional regular lattices. We compute the approximation error in terms of the stepsize and angles that define the lattice. Our results show that hexagonal lattices are optimal, in the sense that they minimize the asymptotic approximation error.
\end{abstract}
\begin{IEEEkeywords}
Approximation error bounds, Cartesian grids, continuous and piecewise linear, Fourier-domain analysis, hexagonal grids.
\end{IEEEkeywords}
\section{Introduction \label{sec:intro}}
\IEEEPARstart{C}{ontinuous} and piecewise-linear (CPWL) representations play a fundamental role in signal processing, computer graphics, and computational mathematics \cite{Eriksson2004, ciarlet1978finite, pourya2023delaunay}. They are widely appreciated for their simplicity, computational efficiency, and ability to approximate complex structures. They are also of interest in machine learning because they encompass the same class of functions generated by neural networks with the rectified linear unit (ReLU) activation functions \cite{arora2016understanding}. Box splines extend univariate B-splines to multiple dimensions and provide a structured framework for the construction of CPWL search spaces \cite{de2013box, KIM2024128376, goujon2022stable, pourya2024continuous, pourya2024box}. 

A significant body of research investigates the approximation error associated with CPWL representations \cite{bertoluzza2012primer, devore1998nonlinear}. However, much of this work focuses on the upper bounds of the error, and less attention is paid to the exact form of the asymptotic error. Fourier-domain methods such as \cite{blu1999quantitative} provide powerful tools to analyze such asymptotic behaviors. However, their final results are limited to the one-dimensional scenario. 

In this paper, we focus on the two-dimensional case. We rely on box splines to construct each CPWL function over a domain that is partitioned by triangulations where the vertices are on regular lattices. These lattices and the edges of the triangulation form an underlying grid for each CPWL function. Cartesian and hexagonal grids are the well-known examples. Cartesian grids are simple to use, whilst hexagonal grids are known to have better sampling properties \cite{van2004hex, campos2021learning}. Our goal is to investigate the effect of the grid on the approximation error. Our main contributions are as follows.
\begin{enumerate}
    \item Computation of the approximation error for a general grid: We compute the approximation error for a box-spline-based CPWL search space with a general grid in terms of its angles and stepsize. We provide this result in Theorem \ref{the:error_computation}. In Theorem \ref{the:error_upper_bound}, we present an upper bound for the dominant term of this error in the asymptotic regime. Notably, we compute an asymptotic error constant that depends on the angles that define the grid. 
    \item Optimality of the hexagonal grid: We show that the asymptotic error constant is minimized for the hexagonal grid. We present examples that validate this result. 
    \item Relation to ReLU neural networks: We provide in Theorem \ref{the:box_spline_as_relu} a concise representation of box splines as two-layer neural networks with ReLU activation functions. To our knowledge, this is the simplest construction of two-dimensional box splines with ReLU functions.  
\end{enumerate}

In Section \ref{sec:pre}, we define box splines and their associated CPWL search spaces, and present our proposed parameterization for the grid. We then formally define the approximation error. In Section \ref{sec:methods}, we present our results for the computation and analysis of this approximation error.


\section{Preliminaries \label{sec:pre} and Problem Formulation}
\subsection{Continuous and Piecewise-Linear (CPWL) Box Splines \label{part:def_cart_hex}}
The two-dimensional CPWL box spline \smash{$B_{\M \Xi} \colon \R^2 \to \R$} is defined through three vectors $\{\V \xi_n\}_{n=1}^{3}$, where $\{\V \xi_n\}_{n=1}^{2}$ are linearly independent and $\V \xi_{3} = \V \xi_1 +  \V \xi_2$. We refer to $\M \Xi$ as the grid matrix and define it as $\M \Xi = [\V \xi_1 \  \V \xi_2] \in \R^{2 \times 2}$. The box spline in the Fourier domain is expressed as 
\begin{equation}
    \label{eq:def_box_spline}
    \hat{B}_{\M \Xi}(\V{\omega}) = \abs{\det \M \Xi}\prod_{r=1}^{3} \mathrm{sinc}\left(\frac{{ \V \xi_r^{\top} \V{\omega} }}{2}\right),
\end{equation}
where $\abs{\det \M \Xi}$ is the determinant of $\M \Xi$ and $\mathrm{sinc}(\omega) = \frac{\sin \omega}{\omega}$. 

If we choose $\M \Xi = \M I$ (identity matrix), we obtain the Cartesian box spline with Fourier transform
\begin{equation}
    \label{eq:cartesian_fourier}
    \hat{\varphi}(\V{\omega}) =  \mathrm{sinc}\left(\frac{\omega_1 + \omega_2}{2}\right) \prod_{r=1}^{2} \mathrm{sinc}\left(\frac{\omega_r}{2}\right).
\end{equation}
The formula for the spatial evaluation of $\varphi$ ensues, with 
\begin{equation}
    \label{eq:cartesian_spatial}
    \varphi(\V x) = \max\big(1 + \min(x_1, x_2, 0) -  \max(x_1, x_2, 0), 0 \big),
\end{equation}
taken from \cite{pourya2024box}. Any CPWL box spline $B_{\M \Xi}$ can be expressed in terms of the Cartesian box spline $\varphi$ as $B_{\M \Xi}(\V x) = \varphi(\M \Xi^{-1} \V x)$. This follows from a simple change of variable in the Fourier domain. In Figure \ref{fig:boxsplines}, we present two examples of box splines: Cartesian box spline with $ \M \Xi_{\mathrm{Cart}} = T \M I$, and hexagonal box spline with $ \M \Xi_{\mathrm{Hex}} = T \M D_{\mathrm{Hex}}$ with $\M D_{\mathrm{Hex}} = ({\frac{\sqrt{3}}{2}})^{-0.5} \begin{bmatrix}
    1 & -0.5\\
    0 & \frac{\sqrt{3}}{2}
\end{bmatrix}$ for some $T > 0$.

\subsection{CPWL Search Space}
We define the CPWL search space
\begin{equation}
    \mathcal{V}_{\M \Xi} = \left\{\sum_{\V k \in \Z^2} c\left[\V k\right] B_{\M \Xi}\left(\V \cdot - \M \Xi \V k\right) \colon c\left[\V \cdot\right] \in \ell_2(\Z^2) \right\}, 
\end{equation}
or equivalently ,
\begin{equation}
    \label{eq:search_space_with_phi}
    \mathcal{V}_{\M \Xi} = \left\{ \sum_{\V k \in \Z^2} c\left[\V k\right] \varphi\left(\M \Xi ^{-1} \V \cdot - \V k\right) \colon c\left[\V \cdot\right] \in \ell_2(\Z^2) \right\}.
\end{equation}
  There, we have used the Cartesian box spline $\varphi$ and the relation $B_{\M \Xi}(\V x) = \varphi(\M \Xi^{-1} \V x)$. The translated basis functions $\{B_{\M \Xi}(\V \cdot - \M \Xi \V k)\}_{\V k \in \Z^2}$ form a Riesz basis of $\mathcal{V}_{\M \Xi}$, which guarantees a stable link between each mapping $s \colon \R^2 \to \R$ and its expansion coefficients $c$. It can further reproduce any affine mapping \cite{goujon2022stable}. The domain of each function $s \in \mathcal{V}_{\M \Xi}$ thus consists of triangles with vertices located on the regular lattice $\{\M \Xi \V k\}_{\V k \in \Z^2}$. This lattice and the edges of the triangulation form an underlying grid for each function. The grid lines are parallel to the directions $\V \xi_1, \V \xi_2$, and $\V \xi_3 = \V \xi_1 + \V \xi_2$ where we had that $\M \Xi = [\V \xi_1 \  \V \xi_2]$. 

\subsection{Parameterization of the Grid Matrix}
We parameterize $\M \Xi$ by the two angles $\theta_1, \theta_2$, and the stepsize $T$ as
\begin{equation}
    \label{eq:grid_def}
    \M \Xi = \frac{T}{\sqrt{\sin(\theta_2 - \theta_1)}} \begin{bmatrix}
        \cos{\theta_1} & \cos{\theta_2} \\
        \sin{\theta_1} & \sin{\theta_2} 
    \end{bmatrix}. 
\end{equation}
 We define $\delta = \left(\theta_2 - \theta_1\right)$, and assume that $0 < \theta_1, \delta < 2\pi$ (without loss of generality). It follows that $\abs{\det \M \Xi} = T^2$, which ensures that the number of grid points per area remains invariant for grids constructed with different angles. This invariance allows for a fair comparison of grids. Our parameterization encompasses the Cartesian grid matrix $\M \Xi_{\mathrm{Cart}}$ with $\theta_1 = 0, \theta_2=\frac{\pi}{2}$, and the hexagonal grid matrix $\M \Xi_{\mathrm{Hex}}$ with $\theta_1 = 0, \theta_2=\frac{2\pi}{3}$. We represent the central parts of the grids constructed with $\M \Xi_{\mathrm{Cart}}$ and $\M \Xi_{\mathrm{Hex}}$ in Figure \ref{fig:grids}. 
\subsection{Formulation of the Problem }
For $f \in L_2(\R^2)$, we are interested in the minimum $L_2$-error solution 
\begin{equation}
    f_{\text{CPWL}} \coloneqq \argmin_{s \in \mathcal{V}_{\M \Xi}} \norm{f - s}_{L_2}.
\end{equation}
\begin{figure}[hpt]
\begin{center}
  \includegraphics[width=7cm,height=5cm,keepaspectratio]{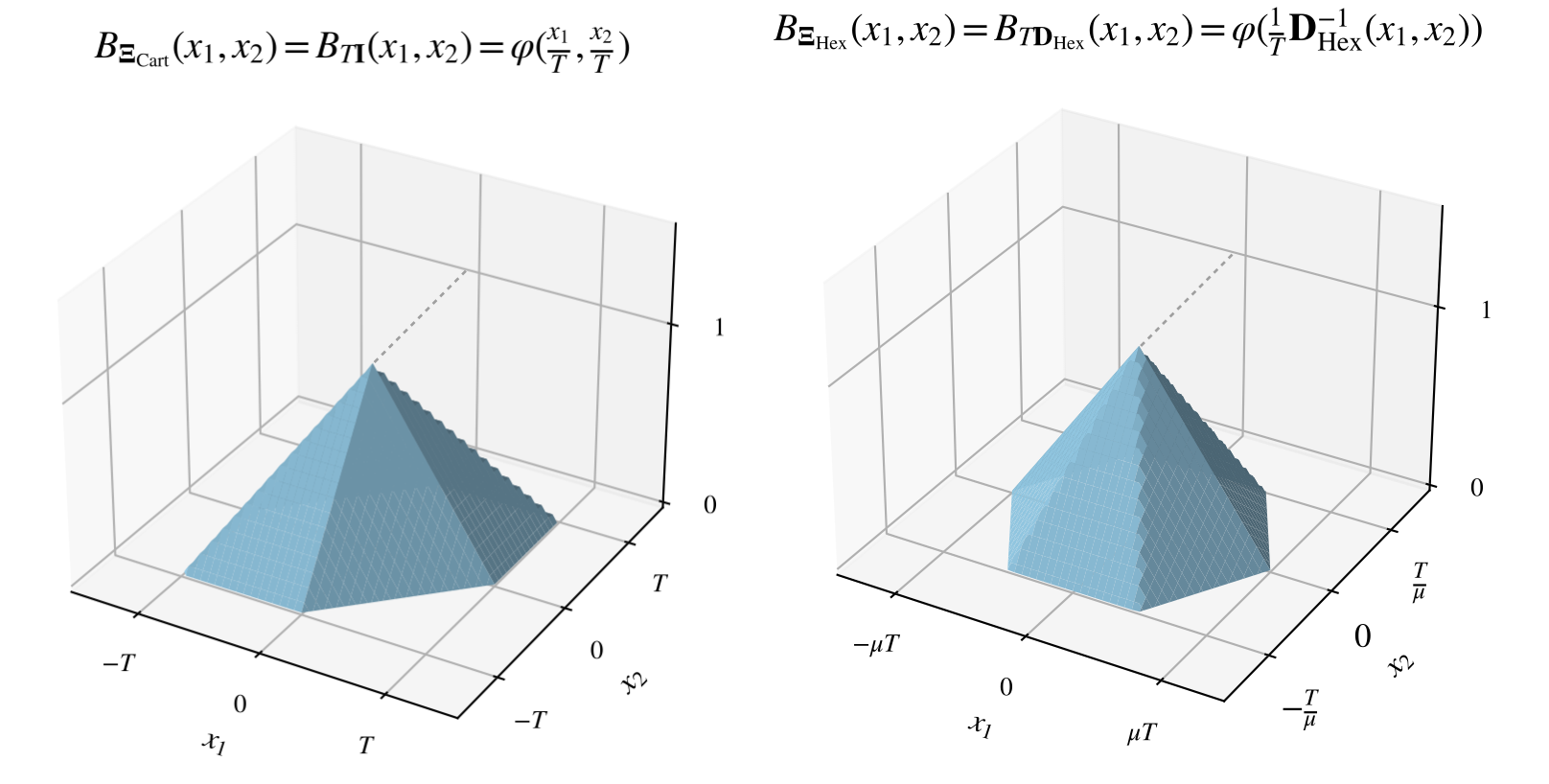}
  \caption{Cartesian (left) and hexagonal ($\mu = ({\frac{\sqrt{3}}{2}})^{-0.5}$) (right) box splines.}
  \label{fig:boxsplines}
\end{center}
\end{figure}

\begin{figure}[hpt]
\begin{center}
  \includegraphics[width=7cm,height=5cm,keepaspectratio]{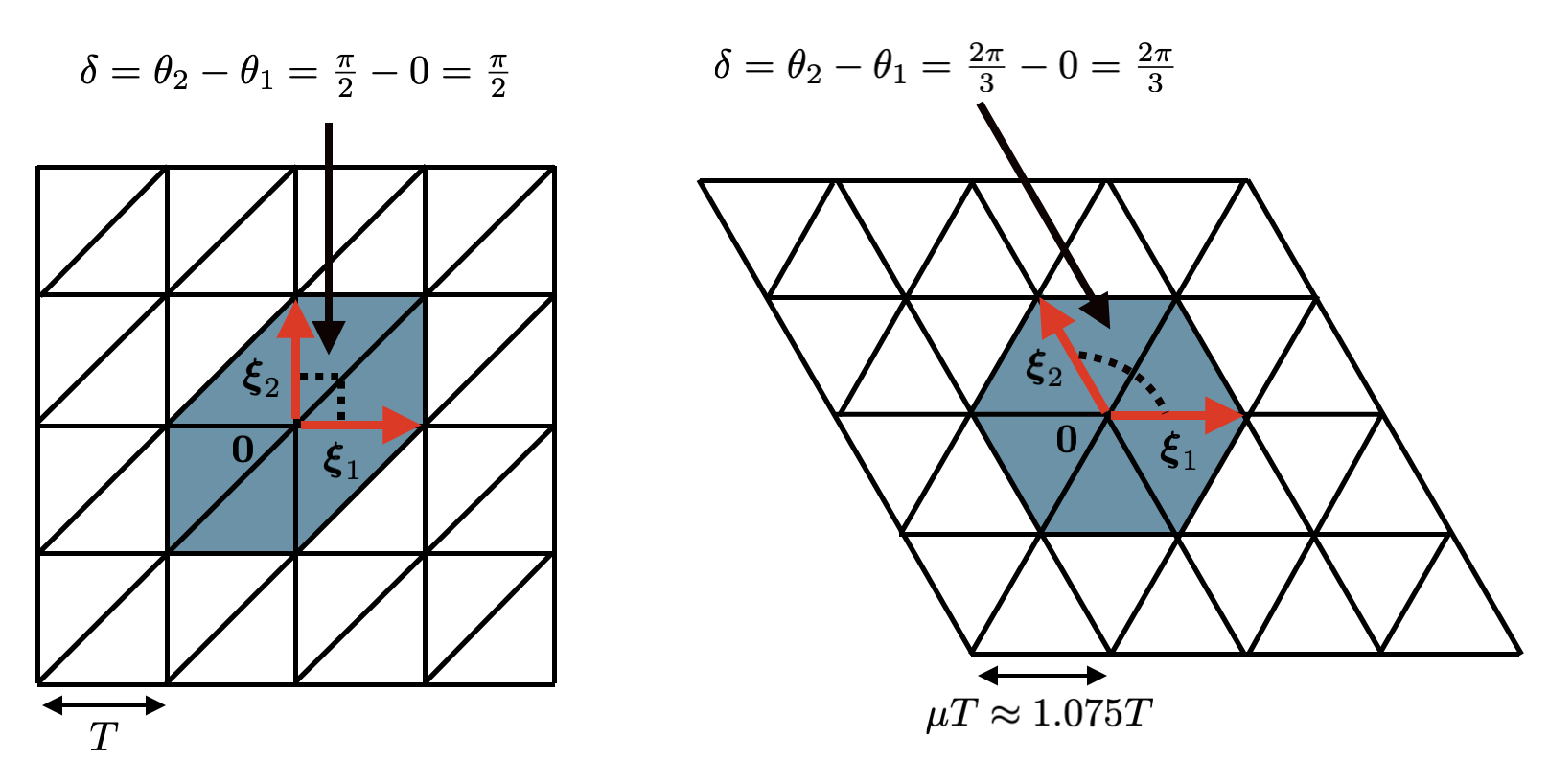}
  \caption{Cartesian grid with $\M \Xi_{\mathrm{Cart}} = [\V \xi_1 \  \V \xi_2] = \begin{bmatrix}
      T & 0 \\
      0 & T
  \end{bmatrix}$ (left) and hexagonal grid with $\M \Xi_{\mathrm{Hex}} = [\V \xi_1 \  \V \xi_2] = \begin{bmatrix}
      \mu T   & -0.5\mu T \\
      0 & 0.5\sqrt{3} \mu T
  \end{bmatrix}$ and $\mu = ({\frac{\sqrt{3}}{2}})^{-0.5}$ (right). The highlighted region depicts the support of the corresponding box splines $B_{\Xi_{\mathrm{Cart}}}$ and $B_{\Xi_{\mathrm{Hex}}}$.}
  \label{fig:grids}
\end{center}
\end{figure}
\vspace{-3mm}
This solution could be computed through the projector $P_{\mathcal{V}_{\M \Xi}}$ as 
\begin{align}
    &f_{\text{CPWL}} (\V x) =  P_{\mathcal{V}_{\M \Xi}} \{f\} (\V x) \\
   &= \sum_{\V k \in \Z^2} \left\langle f, \frac{1}{\abs{\det \M \Xi}} \phi (\M \Xi^{-1} \V\cdot - \V k)\right\rangle_{L_2} \varphi(\M \Xi^{-1} \V x - \V k),
\end{align}
which is taken from \cite{ramani2009} and is a classic result in approximation theory \cite{DEBOOR199437, 489025, BLU1999219}. The dual basis $\phi$ is defined through its Fourier transform
\begin{equation}
    \phi (\V \omega) = \frac{\hat{\varphi}(\V \omega)}{A_{\varphi} (\V \omega)},
\end{equation}
with
\begin{equation}
    \label{eq:def_a_phi}
    A_{\varphi}(\V \omega) = \sum_{\V k \in \Z^2} \abs{\hat{\varphi}(\V \omega + 2 \pi \V k)}^2.
\end{equation}
We define 
\begin{equation}
    \label{eq:define_e_f}
    \epsilon_{\M \Xi}(f)= \norm{f - P_{\mathcal{V}_{\M \Xi}} \{f\}}_{L_2}.
\end{equation}
In this paper, we want to quantify the error $e_{\M \Xi}(f)$ in terms of the stepsize $T$ and the angles $\theta_1$ and $\theta_2$ found in \eqref{eq:grid_def}. In simple words, we want to quantify the effect of the grid on the approximation error.  

\section{Methods \label{sec:methods}}
We first present our results for the computation of the error $ \epsilon_{\M \Xi}(f)$ and define an asymptotic error constant that depends on the angles of the grid. Then, we investigate the effect of the grid angles on the approximation error. Mainly, we show that the asymptotic error constant is minimized for hexagonal grids. Finally, we relate our findings to the approximation that results from ReLU neural networks. 
\subsection{Computation of the Approximation Error \label{part:a}}
 We first present an explicit formula for $A_\varphi$ in Proposition \ref{prop:A_phi}, which is crucial for the calculation of the error. Then, in Theorem \ref{the:error_computation}, we provide our result for the computation of $\epsilon_{\M \Xi}(f)$. Finally, in Theorem \ref{the:error_upper_bound}, we provide an upper bound for the dominant term which involves the asymptotic error constant $C(\theta_1, \theta_2)$.
\begin{proposition}
    \label{prop:A_phi}
    Let $\varphi$ be the Cartesian box spline. Then, one has that
    \begin{equation}
        \label{eq:a_phi_cartesian}
        A_{\varphi} (\V \omega) = \frac{1}{2} + \frac{1}{6} \big(\cos (\omega_1) + \cos(\omega_2) + \cos(\omega_1 + \omega_2)\big).
    \end{equation}
\end{proposition}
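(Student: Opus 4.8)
The plan is to recognize $A_\varphi$ as the Fourier series of the sampled autocorrelation of $\varphi$ and then to read off its handful of nonzero coefficients from the explicit spatial formula \eqref{eq:cartesian_spatial}. Starting from the definition \eqref{eq:def_a_phi} and applying the Poisson summation formula (a standard step in sampling and approximation theory; see, e.g., \cite{BLU1999219, DEBOOR199437}), one gets
\begin{equation*}
    A_{\varphi}(\V{\omega}) = \sum_{\V k \in \Z^2} a[\V k]\,\euler^{-\imgn \V k^{\top}\V{\omega}},
\end{equation*}
where $a[\V k] = \langle \varphi, \varphi(\V \cdot - \V k)\rangle_{L_2}$ is the sampled autocorrelation of $\varphi$, so that the claim is equivalent to identifying the coefficients $a[\V k]$. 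From \eqref{eq:cartesian_fourier} (or directly from \eqref{eq:cartesian_spatial}) the function $\varphi$ is real, even, and invariant under $x_1 \leftrightarrow x_2$; hence $a[\V k] = a[-\V k]$ and $a[(k_1,k_2)] = a[(k_2,k_1)]$.

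I would then exploit the compact support of $\varphi$. By \eqref{eq:cartesian_spatial}, $\varphi$ is supported on the hexagon $H \subset [-1,1]^2$ with vertices $\pm(1,0)$, $\pm(0,1)$, $\pm(1,1)$, and $\varphi$ is affine on each of the six triangles obtained by joining the origin to two consecutive vertices. Consequently $a[\V k] = 0$ whenever $(H + \V k) \cap H$ has zero area; inspecting the nine candidates $\V k \in \{-1,0,1\}^2$ leaves only $\V k \in \{\V 0,\, \pm(1,0),\, \pm(0,1),\, \pm(1,1)\}$ (in particular $H$ and $H \pm (1,-1)$ meet only along an edge, so $a[\pm(1,-1)] = 0$). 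Together with the symmetries above, it therefore suffices to compute the three numbers $a[\V 0] = \norm{\varphi}_{L_2}^2$, $a[(1,0)]$, and $a[(1,1)]$, since grouping the remaining terms of the Fourier series by symmetry yields $A_{\varphi}(\V{\omega}) = a[\V 0] + 2a[(1,0)](\cos\omega_1 + \cos\omega_2) + 2a[(1,1)]\cos(\omega_1 + \omega_2)$.

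Finally I would evaluate these three inner products. On each cell of the relevant overlap region, both $\varphi(\V x)$ and $\varphi(\V x - \V k)$ coincide with explicit affine expressions (the triangle-wise pieces of $\varphi$ read off from \eqref{eq:cartesian_spatial}), so each integral reduces to integrating a quadratic polynomial over a triangle or a parallelogram, which is elementary; this gives $a[\V 0] = \tfrac12$ and $a[(1,0)] = a[(1,1)] = \tfrac1{12}$, i.e., exactly \eqref{eq:a_phi_cartesian}. A convenient consistency check along the way is $A_{\varphi}(\V 0) = \sum_{\V k} a[\V k] = \abs{\hat\varphi(\V 0)}^2 = 1$, since $\hat\varphi$ vanishes on $2\pi\Z^2 \setminus \{\V 0\}$, and indeed $\tfrac12 + 2(\tfrac1{12} + \tfrac1{12} + \tfrac1{12}) = 1$. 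I expect the only mildly delicate part to be the bookkeeping in this last step — for each relevant $\V k$ one must correctly identify the overlap polygon $(H + \V k) \cap H$ and which affine pieces of $\varphi$ and of $\varphi(\V \cdot - \V k)$ are active on each cell — but once that is set up the rest is routine.
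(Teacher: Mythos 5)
Your proposal is correct and follows essentially the same route as the paper: both identify $A_\varphi$ as the discrete-time Fourier transform of the sampled autocorrelation $a[\V k]=\big(\varphi*\varphi(-\V\cdot)\big)(\V k)$, find that the only nonzero samples are $a[\V 0]=\tfrac12$ and $a[\pm(1,0)]=a[\pm(0,1)]=a[\pm(1,1)]=\tfrac1{12}$, and regroup the exponentials into cosines. Your added bookkeeping (symmetry, hexagonal support, the vanishing of $a[\pm(1,-1)]$, and the check $A_\varphi(\V 0)=1$) is accurate but does not change the argument.
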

\begin{proof}
    The cumbersome computation of the infinite sum in \eqref{eq:def_a_phi} is simplified by the equality
\begin{equation}
   \sum_{\V k \in \Z^2} \abs{\hat{\varphi}(\V \omega + 2 \pi \V k)^2} = \mathrm{DTFT}\Big\{\big(\varphi * \varphi(- \V \cdot)\big) \big(\V k\big)\Big\}.
\end{equation}
To compute the autocorrelation sequence $a[\V k] = \big(\varphi * \varphi(- \V \cdot)\big) \big(\V k\big), \ \V k \in \Z^2$, we use the spatial evaluation of $\varphi$ in \eqref{eq:cartesian_spatial} and compute the result integrals to obtain that
\begin{equation}
    a[\V k] = \begin{cases}
        \frac{1}{2}, & \V k = \V 0 \\
        \frac{1}{12}, & \V k \in \{0, 1\}^2 \setminus \{(0, 0)\}\\
        0, & \text{otherwise}.
    \end{cases}
\end{equation}
Now, to complete the proof and obtain \eqref{eq:a_phi_cartesian}, we use the definition $\mathrm{DTFT} \big\{a[\V k]\big\}(\omega) = \sum_{\V k \in Z^2} a[\V k] \euler ^{-\imgn \V k ^{\top} \V \omega}$ and the equality $\cos(\cdot) = \frac{1}{2}(\euler^{\imgn \cdot} + \euler^{- \imgn \cdot})$.
\end{proof}

\begin{theorem}
    For a band-limited $f \in W^{\rho}_2$ (Sobolov space of order $\rho$) with $\rho > 2$ and for a general grid $\M \Xi$ defined with $0< T < 1$ and $\theta_1$ and $\theta_2$ from \eqref{eq:grid_def}, it holds that
    \label{the:error_computation}
    \begin{equation} 
         \epsilon_{\M \Xi}(f) = \epsilon_{\M \Xi, \text{asym}}(f) + \mathcal{O}(T^{\min(\rho, 3)}),
    \end{equation}
    where
    \begin{equation}
        \label{eq:error_dominant}
        \epsilon_{\M \Xi, \text{asym}}(f) = \frac{T^2}{12\sqrt{5}} \big({\int_{\R^2} \langle \M H_f(\V x), \begin{bmatrix}
        \alpha & \gamma \\
        0 & \beta
    \end{bmatrix} \rangle^2 \mathrm{d} \V x\big)}^{\frac{1}{2}}.
    \end{equation}
There, $\M H_f$ denotes the Hessian of $f$ and 
\begin{align}
\label{eq:abg}
    \alpha &= \frac{\cos^2 (\theta_1) + \cos (\theta_1) \cos (\theta_2) + \cos^2 (\theta_2)}{\abs{\sin(\theta_2 - \theta_1)}}, \notag \\
     \beta &= \frac{\sin^2 (\theta_1) + \sin (\theta_1) \sin (\theta_2) + \sin^2 (\theta_2)}{\abs{\sin(\theta_2 - \theta_1)}}\notag, \\
   \gamma &= \frac{\sin (2\theta_1) + \sin (\theta_1 + \theta_2) + \sin (2\theta_2)}{\abs{\sin(\theta_2 - \theta_1)}}.
\end{align}

\end{theorem}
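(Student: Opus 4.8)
\emph{Plan.} I would use the standard Fourier-domain route: express $\epsilon_{\M \Xi}(f)^2$ as an integral of $|\hat f|^2$ against an error kernel built from $\hat\varphi$ and $A_\varphi$, Taylor-expand that kernel at the origin, and substitute the explicit parameterization \eqref{eq:grid_def} to read off $\alpha,\beta,\gamma$. \emph{Step 1 (error kernel).} By the classical theory of shift-invariant approximation \cite{blu1999quantitative, DEBOOR199437, BLU1999219} — equivalently, via the change of variable $\V x = \M \Xi\V u$ mapping $\mathcal V_{\M \Xi}$ to the integer-shift space generated by $\varphi$, together with \eqref{eq:def_a_phi} — the orthogonal-projection error satisfies
\[
\epsilon_{\M \Xi}(f)^2 = \frac{1}{(2\pi)^2}\int_{\R^2}|\hat f(\V \nu)|^2\, E_\varphi\big(\M \Xi^\top\V \nu\big)\,\mathrm d\V \nu + R_T(f),\qquad E_\varphi(\V \omega)\coloneqq 1-\frac{|\hat\varphi(\V \omega)|^2}{A_\varphi(\V \omega)},
\]
where $R_T(f)$ gathers the aliasing cross-terms over the dual lattice $2\pi\M \Xi^{-\top}\Z^2$, whose fundamental cell has area $(2\pi)^2/T^2$. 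Since $f$ is band-limited, for $T$ small enough $\operatorname{supp}\hat f$ lies in a single such cell, so $R_T(f)=0$ and the formula is exact; Proposition \ref{prop:A_phi} shows $A_\varphi$ is bounded away from $0$, so $E_\varphi$ is smooth near the origin.

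\emph{Step 2 (Taylor expansion of $E_\varphi$).} Using $\mathrm{sinc}(t)=1-\tfrac{t^2}{6}+\tfrac{t^4}{120}+\mathcal O(t^6)$ in \eqref{eq:cartesian_fourier} and $\cos t=1-\tfrac{t^2}{2}+\tfrac{t^4}{24}+\mathcal O(t^6)$ in \eqref{eq:a_phi_cartesian}, I would check that $|\hat\varphi(\V \omega)|^2$ and $A_\varphi(\V \omega)$ have the \emph{same} expansion up to degree $2$, namely $1-\tfrac16(\omega_1^2+\omega_1\omega_2+\omega_2^2)$. Hence the quadratic terms cancel in the ratio and $E_\varphi$ is $\mathcal O(|\V \omega|^4)$, with its degree-$4$ part equal to (degree-$4$ part of $A_\varphi$) minus (degree-$4$ part of $|\hat\varphi|^2$). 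Evaluating the two homogeneous quartics and simplifying should give the key identity
\[
E_\varphi(\V \omega)=\tfrac{1}{720}\big(\omega_1^4+2\omega_1^3\omega_2+3\omega_1^2\omega_2^2+2\omega_1\omega_2^3+\omega_2^4\big)+\mathcal O(|\V \omega|^6)=\tfrac{1}{720}\big(\omega_1^2+\omega_1\omega_2+\omega_2^2\big)^2+\mathcal O(|\V \omega|^6).
\]

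\emph{Step 3 (substitution and assembly).} With $\M \Xi$ from \eqref{eq:grid_def}, $\omega_r=\big(\M \Xi^\top\V \nu\big)_r=\tfrac{T}{\sqrt{\sin\delta}}\big(\cos\theta_r\,\nu_1+\sin\theta_r\,\nu_2\big)$ for $r=1,2$ (with $\sin\delta>0$ so that the parameterization is real). Expanding $\omega_1^2+\omega_1\omega_2+\omega_2^2$ and collecting the coefficients of $\nu_1^2,\nu_1\nu_2,\nu_2^2$, a short trigonometric computation identifies them as $\alpha,\gamma,\beta$ of \eqref{eq:abg}, so that $E_\varphi(\M \Xi^\top\V \nu)=\tfrac{T^4}{720}(\alpha\nu_1^2+\gamma\nu_1\nu_2+\beta\nu_2^2)^2+\mathcal O(T^6|\V \nu|^6)$. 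Plugging this into Step 1, the leading term is $\tfrac{T^4}{720}\cdot\tfrac{1}{(2\pi)^2}\int(\alpha\nu_1^2+\gamma\nu_1\nu_2+\beta\nu_2^2)^2|\hat f(\V \nu)|^2\mathrm d\V \nu$; since $\widehat{\partial_i\partial_j f}(\V \nu)=-\nu_i\nu_j\hat f(\V \nu)$ and $\langle\M H_f(\V x),\begin{bmatrix}\alpha&\gamma\\0&\beta\end{bmatrix}\rangle=\alpha\partial_1^2 f+\gamma\partial_1\partial_2 f+\beta\partial_2^2 f$, Plancherel turns it into $\tfrac{T^4}{720}\int_{\R^2}\langle\M H_f(\V x),\begin{bmatrix}\alpha&\gamma\\0&\beta\end{bmatrix}\rangle^2\mathrm d\V x$, which equals $\epsilon_{\M \Xi,\text{asym}}(f)^2$ because $\sqrt{720}=12\sqrt5$.

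\emph{Step 4 (remainder) and main obstacle.} The $\mathcal O(T^6|\V \nu|^6)$ Taylor tail contributes $\lesssim T^6\int|\V \nu|^6|\hat f|^2$, finite for band-limited $f$ (and controlled by $\|f\|_{W^\rho_2}$ up to a power of $T$ in the general Sobolev case), so $\epsilon_{\M \Xi}(f)^2=\epsilon_{\M \Xi,\text{asym}}(f)^2+\mathcal O(T^6)$; taking a square root and using $\epsilon_{\M \Xi,\text{asym}}(f)^2=\mathcal O(T^4)$ yields $\epsilon_{\M \Xi}(f)=\epsilon_{\M \Xi,\text{asym}}(f)+\mathcal O(T^{\min(\rho,3)})$ since $\rho>2$. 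The main obstacle is purely computational and lives in Steps 2–3: recognizing the quartic $\omega_1^4+2\omega_1^3\omega_2+3\omega_1^2\omega_2^2+2\omega_1\omega_2^3+\omega_2^4$ as the perfect square $(\omega_1^2+\omega_1\omega_2+\omega_2^2)^2$, and carrying out the trigonometric bookkeeping so that the quadratic-form coefficients land exactly on $\alpha,\beta,\gamma$ in \eqref{eq:abg}. Conceptually nothing else is delicate beyond keeping track of constants and the routine justification that aliasing vanishes for band-limited $f$ at small $T$.
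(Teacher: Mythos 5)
Your proposal is correct and follows essentially the same route as the paper: the Blu--Unser error kernel $1-|\hat\varphi|^2/A_\varphi$ with $A_\varphi$ from Proposition \ref{prop:A_phi}, its fourth-order Taylor expansion $\tfrac{1}{720}(\omega_1^2+\omega_1\omega_2+\omega_2^2)^2$, the substitution $\V\omega=\M\Xi^{\top}\V\nu$ yielding $\alpha,\gamma,\beta$, and Parseval to reach the Hessian form with $\sqrt{720}=12\sqrt5$. The only (legitimate) deviation is in the first step, where you justify the kernel formula by the vanishing of aliasing cross-terms for band-limited $f$ at small $T$, whereas the paper invokes the $\epsilon_{T\M I}(f)=\epsilon_{\mathrm{dom},T\M I}(f)+\mathcal O(T^{\rho})$ result of \cite{ramani2009} after first reducing the general grid to $T\M I$ by a change of variables.
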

\begin{proof}
For a general grid $\M \Xi$, a change of variables lead to
\begin{equation}
    \label{eq:grid_change_error}
     \epsilon_{\M \Xi}(f)  =   \epsilon_{T\M I}\left(f\left(\frac{1}{T} \M \Xi \V \cdot\right)\right).
\end{equation}
From Section 2.4 of \cite{ramani2009}, with $f \in W^{\rho}_2$ and $\rho > 2$, it holds that
\begin{equation}
    \label{eq:err_errk}
    \epsilon_{T\M I}(f) = \epsilon_{\text{dom}, T \M I} (f) + O(T^\rho),
\end{equation}
where 
\begin{equation}
    \epsilon_{\text{dom}, T \M I}(f) = \Big[\frac{1}{4\pi^2} \int_{\R^2}  \epsilon_{\phi, \varphi} (\V \omega T) \abs{\hat{f}(\V \omega)}^2 \mathrm{d} \V \omega \Big]^{\frac{1}{2}}.
\end{equation}
The error kernel $\epsilon_{\phi, \varphi}$ is defined as
\begin{align}
    \label{eq:def_error_kernel}
    \epsilon_{\phi, \varphi} (\V \omega) = 1 - \frac{\abs{\hat{\varphi}(\V \omega)}^2}{A_{\varphi}(\V \omega)}.
\end{align}
In our case, from \eqref{eq:cartesian_fourier} and Proposition \ref{prop:A_phi}, \eqref{eq:def_error_kernel} simplifies to 
\begin{align}
    \epsilon_{\phi, \varphi} (\V \omega) &= 1 \\ & - \frac{\prod_{r=1}^2 \mathrm{sinc}^2(\frac{\omega_r}{2})  \mathrm{sinc}^2(\omega_1 + \omega_2)}{ \frac{1}{2} + \frac{1}{6} \big(\cos (\omega_1) + \cos(\omega_2) + \cos(\omega_1 + \omega_2)\big)}.
\end{align}
By a Taylor series around $\V 0$, we get that
\begin{equation}
    \epsilon_{\phi, \varphi} (\V \omega T) = \frac{T^4}{720} (\omega_1^2 + \omega_1 \omega_2 + \omega_2^2)^2 + \mathcal{O}(T^6).
\end{equation}
Then, it follows that
\begin{align}
    \epsilon_{\text{dom}, T \M I}(f)^2 =  \epsilon_{\text{Taylor}, T \M I}(f)^2 + \mathcal{O}(T^6),
\end{align}
where we used that $\int_{\R^2} \mathcal{O}(T^6)  \abs{\hat{f}(\V \omega)}^2 \mathrm{d} \V \omega = \mathcal{O}(T^6)$ due to $f$ being band-limited, and
\begin{equation}
     \epsilon_{\text{Taylor}, T \M I}(f)^2 = \smash{\frac{T^4}{2880\pi^2} \int_{\R^2} {\big((\omega_1^2 + \omega_1 \omega_2 + \omega_2^2) \hat{f}(\V \omega)\big)}^2 \mathrm{d} \V \omega}.
\end{equation}
From \eqref{eq:grid_change_error} and \eqref{eq:err_errk}, we have that
\begin{align}
    \epsilon_{\M \Xi}(f) 
    &= \epsilon_{\text{Taylor}, T \M I}(f(\frac{1}{T} \M \Xi \V \cdot)) + \mathcal{O}(T^{\rho}) + \mathcal{O}(T^{3}) \notag \\
    &= \epsilon_{\text{Taylor}, T \M I}(f(\frac{1}{T} \M \Xi \V \cdot)) + \mathcal{O}(T^{\min(\rho, 3)}).
\end{align}
Next, we define $\M D = \frac{1}{T} \M \Xi$ and compute
\begin{align}
    \label{eq:error_comp_fourier}
    &\epsilon_{\text{Taylor}, T \M I}(f(\frac{1}{T} \M \Xi \V \cdot))^2 = 
   \frac{1}{4\pi^2} \int_{\R^2} \epsilon_{\phi, \varphi} (\V \omega T)  {\big(\hat{f}(\M D^{-\top} \V \omega)\big)}^2 \mathrm{d} \V \omega \notag \\
   &=  \frac{1}{4\pi^2}\int_{\R^2} \epsilon_{\phi, \varphi} (\M D^{\top} \V z T)  \big({\hat{f}(\V z)\big)}^2 \mathrm{d} \V z \notag \\ &= \frac{T^4}{2880\pi^2} \int_{\R^2}\big({(\alpha z_1^2 + \gamma z_1 z_2 + \beta z_2^2) \hat{f}(\V z)\big)}^2\mathrm{d} \V z  \notag   \\ 
   &= \frac{T^4}{720} \int_{\R^2} \big(\alpha \frac{\partial^2 f(\V x)}{\partial x_1^2} + \gamma \frac{\partial^2 f(\V x)}{\partial x_1 \partial x_2} + \beta \frac{\partial^2 f(\V x)}{\partial x_2^2}\big)^2 \mathrm{d} \V x   \notag \\
   &=  \frac{T^4}{720} \int_{\R^2} \langle \M H_f(\V x), \begin{bmatrix}
        \alpha & \gamma \\
        0 & \beta
    \end{bmatrix} \rangle^2 \mathrm{d} \V x. 
\end{align}
\end{proof}
For $T<1$, $ \epsilon_{\M \Xi, \text{asym}}(f)$ is the dominant term of the error. Moreover, in the asymptotic case $T \to 0$, we have that $\epsilon_{\M \Xi}(f)=  \epsilon_{\M \Xi, \text{asym}}(f)$. Now, we present an upper bound for $\epsilon_{\M \Xi, \text{asym}}(f)$.

\begin{figure}[hpt]
\begin{center}
  \includegraphics[width=8cm,height=5cm,keepaspectratio]{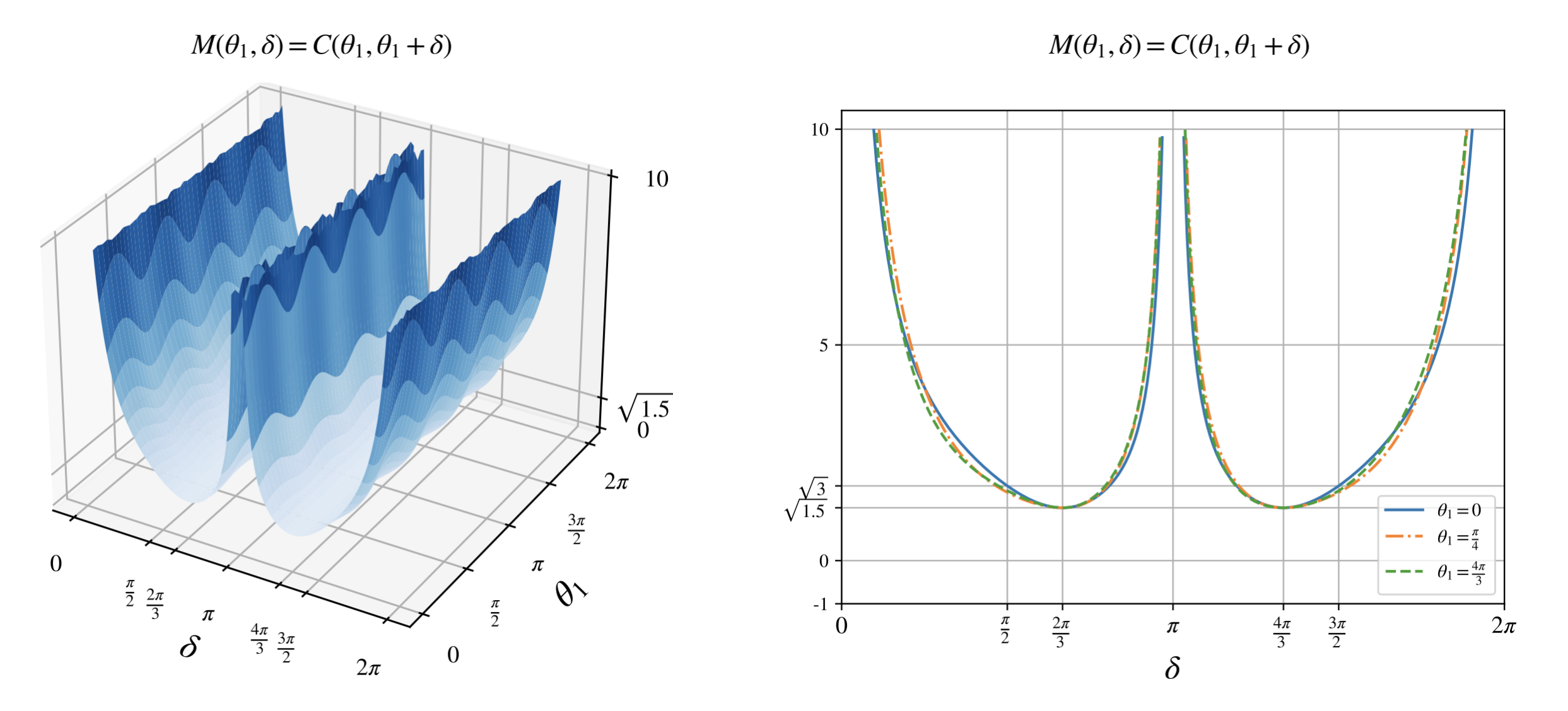}
  \caption{Error constant $M(\theta_1, \delta) = C(\theta_1, \theta_1 + \delta)$ in terms of $\theta_1$ and $\delta = \left(\theta_2 - \theta_1\right)$ (left) and one-dimensional profiles of $M(\theta_1, \delta)$ in terms of $\delta$ for $\theta_1 \in \{0, \frac{\pi}{4}, \frac{4\pi}{3}\}$ (right). In both plots, we only show $M(\theta_1, \delta)$ where $M(\theta_1, \delta) < 10$ for better visual representation. }
  \label{fig:error_plot}
\end{center}
\end{figure}
\begin{theorem}
    \label{the:error_upper_bound}
    For $\epsilon_{\M \Xi, \text{asym}}(f)$ under the same conditions as there in Theorem \eqref{the:error_computation}, it holds that
    \begin{equation}
     \epsilon_{\M \Xi, \text{asym}}(f) \leq  \frac{1}{12\sqrt{5}} C(\theta_1, \theta_2) T^2 \norm{\M H_f}_{F, L_2}, 
\end{equation}
where we define the asymptotic error constant $C(\theta_1, \theta_2) = {(\alpha ^2 + \beta^2 + \gamma^2)}^{\frac{1}{2}}$ through $\alpha, \beta$, and $\gamma$ given in \eqref{eq:abg}, along with the mixed norm $\norm{\M A(\V \cdot) }_{F, L_2} \coloneqq \big(\int_{\R^2} \norm{\M A(\V x)}_F^2\mathrm{d} \V x\big)^{\frac{1}{2}}$.
\end{theorem}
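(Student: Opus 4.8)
The plan is to derive the bound directly from the closed-form expression \eqref{eq:error_dominant} for $\epsilon_{\M \Xi, \text{asym}}(f)$ by applying the Cauchy--Schwarz inequality pointwise inside the integral. Write $\M B \coloneqq \begin{bmatrix} \alpha & \gamma \\ 0 & \beta \end{bmatrix}$, so that, recalling that $\langle \cdot, \cdot \rangle$ in \eqref{eq:error_dominant} is the Frobenius inner product $\langle \M A, \M B \rangle = \mathrm{tr}(\M A^{\top}\M B)$, the quantity to be bounded is $\tfrac{T^2}{12\sqrt 5}\big(\int_{\R^2} \langle \M H_f(\V x), \M B\rangle^2 \mathrm{d}\V x\big)^{1/2}$.

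First I would observe that for each fixed $\V x \in \R^2$, Cauchy--Schwarz for the Frobenius inner product gives
\begin{equation}
    \abs{\langle \M H_f(\V x), \M B \rangle} \leq \norm{\M H_f(\V x)}_F \, \norm{\M B}_F ,
\end{equation}
and a one-line computation shows $\norm{\M B}_F = \big(\alpha^2 + \gamma^2 + \beta^2\big)^{1/2} = C(\theta_1, \theta_2)$. Squaring, integrating over $\V x$, and invoking the definition of the mixed norm $\norm{\cdot}_{F, L_2}$ then yields
\begin{equation}
    \int_{\R^2} \langle \M H_f(\V x), \M B\rangle^2 \mathrm{d}\V x \;\leq\; C(\theta_1,\theta_2)^2 \int_{\R^2} \norm{\M H_f(\V x)}_F^2 \,\mathrm{d}\V x \;=\; C(\theta_1,\theta_2)^2 \, \norm{\M H_f}_{F, L_2}^2 ,
\end{equation}
where finiteness of the right-hand side is inherited from the hypothesis $f \in W^{\rho}_2$ with $\rho > 2$ of Theorem \ref{the:error_computation}. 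Taking square roots and multiplying by $\tfrac{T^2}{12\sqrt5}$ gives exactly the claimed inequality.

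There is essentially no obstacle: the entire argument reduces to a single pointwise Cauchy--Schwarz step followed by integration. The only subtlety worth flagging in the write-up is that the bound is not tight, since $\M H_f$ is symmetric while $\M B$ is not: replacing $\M B$ by its symmetric part $\tfrac12(\M B + \M B^{\top})$ does not change the inner product and would give the slightly smaller constant $(\alpha^2 + \beta^2 + \tfrac12\gamma^2)^{1/2}$. I would nonetheless keep the stated constant $C(\theta_1,\theta_2)=(\alpha^2+\beta^2+\gamma^2)^{1/2}$, both because it upper-bounds the sharper one and because its dependence on $(\theta_1,\theta_2)$ is the form that is convenient to minimize when establishing the optimality of the hexagonal grid in what follows.
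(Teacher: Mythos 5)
Your proof is correct and is essentially the same as the paper's: the paper also applies the pointwise Cauchy--Schwarz (stated there as H\"older's) inequality for the Frobenius inner product, identifies $(\alpha^2+\beta^2+\gamma^2)^{1/2}$ with $C(\theta_1,\theta_2)$, and integrates. Your added remark that the constant could be sharpened via the symmetric part of $\begin{bmatrix}\alpha & \gamma\\ 0 & \beta\end{bmatrix}$ is a nice observation but does not change the argument.
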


\begin{proof}
Hölder's inequality for matrices yields that
\begin{equation}
    \langle \M H_f(\V x), \begin{bmatrix}
        \alpha & \gamma \\
        0 & \beta
    \end{bmatrix} \rangle^2 \leq \norm{\M H_f(\V x)}_F^2 (\alpha ^2 + \beta^2 + \gamma^2).
\end{equation}
Then, through \eqref{eq:error_dominant}, we have that
\begin{align}
     \epsilon_{\M \Xi, \text{asym}}(f)^2 \leq \frac{T^4 (\alpha ^2 + \beta^2 + \gamma^2)}{720} \int_{\R^2} \norm{\M H_f(\V x)}_F^2\mathrm{d} \V x.
\end{align}
\end{proof} 
\subsection{Analysis of the Effect of the Grid \label{part:b}}
Now, we investigate the effect of the grid angles $\theta_1$ and $\theta_2$ on the error constant $C(\theta_1, \theta_2)$. In Figure \ref{fig:error_plot}, we plot $M(\theta_1, \delta) = C(\theta_1, \delta + \theta_1)$. The error constant takes higher values as $\delta$ approaches $0, \pi$, and $2\pi$.
More importantly, we observe that, for different values of $\theta_1$, $M(\theta_1, \delta)$ is minimized at $\delta = \frac{2 \pi}{3}$ or $\delta = \frac{4 \pi}{3}$. The minimal value of $C(\theta_1, \theta_2)$ is  $\sqrt{1.5}$. It is achieved when $\left(\theta_2 - \theta_1\right) = \frac{2\pi}{3}$ or $\left(\theta_2 - \theta_1 \right)= \frac{4\pi}{3}$, which corresponds to a hexagonal grid.  

We now present explicit formulas for $\epsilon_{\M \Xi, \text{asym}}(f)$ in the case of the Cartesian ($\M \Xi = \M \Xi_{\mathrm{Cart}}$) and hexagonal ($\M \Xi = \M \Xi_{\mathrm{Hex}}$) grids that are defined in Section \ref{part:def_cart_hex}. In these two cases, we illustrate the computation of $\epsilon_{\M \Xi, \text{asym}}(f)$ for a function $f$ whose Fourier response is a disk function. 

\subsubsection{Error for the Cartesian and Hexagonal Grids} 
For the Cartesian grid, we have that $\theta_1 = 0, \theta_2 = \frac{\pi}{2}$. It follows that $\alpha = \beta= \gamma = 1$ and $C(0, \frac{\pi}{2}) = \sqrt{3}$; therefore, \eqref{eq:error_dominant} simplifies to 
\begin{align}
    \label{eq:error_cartesian}
     &\epsilon_{\M \Xi_{\mathrm{Cart}}, \text{asym}}(f) 
     \notag \\&= \frac{T^2}{12\sqrt{5}} \Big(\int_{\R^2} \big( \frac{\partial^2 f(\V x)}{\partial x_1^2} +\frac{\partial^2 f(\V x)}{\partial x_1 \partial x_2} + \frac{\partial^2 f(\V x)}{\partial x_2^2}\big)^2 \mathrm{d} \V x\Big)^{\frac{1}{2}}.
\end{align}
For the hexagonal grid, it holds that $\theta_1 = 0, \theta_2 = \frac{2\pi}{3}$. It follows that $\alpha = \beta= \frac{\sqrt{3}}{2}$ and $\gamma = 0$, which then yields the elegant formula
\begin{align}
    \epsilon_{\M \Xi_{\mathrm{Hex}}, \text{asym}}(f) 
      &= \frac{T^2}{8\sqrt{15}} \Big(\int_{\R^2} \big(\frac{\partial^2 f(\V x)}{\partial x_1^2} + \frac{\partial^2 f(\V x)}{\partial x_2^2}\big)^2 \mathrm{d} \V x\Big)^{\frac{1}{2}} \notag \\
    &= \frac{T^2}{8\sqrt{15}} \norm{\Delta{f}}_{L_2}. 
\end{align}
where $\Delta{}$ represents the Laplacian operator. We recall that in this case $C(0, \frac{2\pi}{3}) = \sqrt{1.5}$, which is the minimum achieved by $C(\theta_1, \theta_2)$. 
\subsubsection{Error Computation for a Fourier Disk}
In this example, we define the function $f$ through its Fourier transform as
\begin{equation} \label{eq:def_f}
    \hat{f}(\V \omega) = \begin{cases}
      c,  & \sqrt{w_1^2+w_2^2} \leq \omega_{\text{max}} \\
      0,  & \text{otherwise}.
    \end{cases}
\end{equation}
for some $\omega_{\text{max}} > 0$ and $c \in \R$.
We then compute $\epsilon_{\M \Xi, \text{asym}}(f) $ for Cartesian and hexagonal grids with the help of \eqref{eq:error_comp_fourier} as 
\begin{align}
    \epsilon_{\M \Xi_{\mathrm{Cart}}, \text{asym}}(f)  = \frac{T^2 \abs{c} \omega_{\text{max}}^3}{\sqrt{7680\pi}}
\notag \\
    \epsilon_{\M \Xi_{\mathrm{Hex}}, \text{asym}}(f)  = \frac{T^2 \abs{c} \omega_{\text{max}}^3}{\sqrt{11520\pi}}.
\end{align}
Therefore, we have that $\epsilon_{\M \Xi_{\mathrm{Hex}}, \text{asym}}(f)< \epsilon_{\M \Xi_{\mathrm{Cart}}, \text{asym}}(f)$ for a Fourier disk. This observation confirms our claim that hexagonal grids are better in terms of the approximation error.  

\subsection{Box Splines as ReLU Networks \label{part:c}}
Here, we focus on interpolation with box splines on the compact domain $\Omega = (0, 1)^2$. 
Consequently, each function $s \in \mathcal{V}_{T\M I}$ can be constructed using $N = (\frac{1}{T} - 1)^2$ nonzero basis functions for $T < 1$. 

\begin{theorem}
    \label{the:box_spline_as_relu}
    We can represent the Cartesian box spline $\varphi$ using the ReLU$(\cdot) \coloneqq \max(\cdot, 0)$ function as 
    \begin{equation}
        \label{eq:as_relu}
        \varphi(\V x) = \text{ReLU}(1 - \text{ReLU}(x_1-x_2) - \text{ReLU}(x_2) - \text{ReLU}(-x_1))).
    \end{equation}
\end{theorem}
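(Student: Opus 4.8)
The plan is to start from the explicit spatial formula \eqref{eq:cartesian_spatial} for the Cartesian box spline, namely $\varphi(\V x) = \max\big(1 + \min(x_1,x_2,0) - \max(x_1,x_2,0),\, 0\big)$, identify the outer $\max(\cdot,0)$ with $\mathrm{ReLU}(\cdot)$, and reduce the claim to the single scalar algebraic identity
\begin{equation}
\label{eq:relu_reduction}
\max(x_1,x_2,0) - \min(x_1,x_2,0) = \mathrm{ReLU}(x_1 - x_2) + \mathrm{ReLU}(x_2) + \mathrm{ReLU}(-x_1), \qquad \V x \in \R^2 .
\end{equation}
Once \eqref{eq:relu_reduction} is in hand, substituting it into \eqref{eq:cartesian_spatial} gives at once $\varphi(\V x) = \mathrm{ReLU}\big(1 - \mathrm{ReLU}(x_1-x_2) - \mathrm{ReLU}(x_2) - \mathrm{ReLU}(-x_1)\big)$, which is \eqref{eq:as_relu}.

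To prove \eqref{eq:relu_reduction} I would offer two short routes and keep the cleaner one. The direct route notes that both sides are continuous and piecewise-linear in $\V x$, so it suffices to check equality on each of the six orderings of the triple $(x_1,x_2,0)$ (ties are harmless by continuity); on each region both sides collapse to the same affine expression, e.g. both equal $x_1$ on $\{x_1\ge x_2\ge 0\}$, both equal $x_1-x_2$ on $\{x_1\ge 0\ge x_2\}$, both equal $-x_2$ on $\{0\ge x_1\ge x_2\}$, and symmetrically on the remaining three. The slicker route, which I would actually write, uses two elementary facts: (i) for any reals $a,b,c$, $\max(a,b,c)-\min(a,b,c) = \tfrac12\big(|a-b|+|b-c|+|c-a|\big)$ (check it assuming $a\le b\le c$); and (ii) whenever $u+v+w=0$, $\mathrm{ReLU}(u)+\mathrm{ReLU}(v)+\mathrm{ReLU}(w) = \tfrac12\big(|u|+|v|+|w|\big)$, since for every real $t$ one has $t=\mathrm{ReLU}(t)-\mathrm{ReLU}(-t)$ and $|t|=\mathrm{ReLU}(t)+\mathrm{ReLU}(-t)$, and summing these over $t\in\{u,v,w\}$ kills the signed parts. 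Applying (i) with $(a,b,c)=(x_1,x_2,0)$ and (ii) with $(u,v,w)=(x_1-x_2,\,x_2,\,-x_1)$, which sum to zero, shows both sides of \eqref{eq:relu_reduction} equal $\tfrac12\big(|x_1-x_2|+|x_2|+|x_1|\big)$.

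I do not expect any genuine obstacle here: modulo \eqref{eq:cartesian_spatial} (quoted from \cite{pourya2024box}) the proof is a one-line substitution, and \eqref{eq:relu_reduction} is a finite verification. The only place needing mild care is the bookkeeping over the six orderings in the direct argument, or, in the slick version, confirming the two auxiliary identities, each of which itself reduces to a trivial case split. If one wanted a derivation not relying on \eqref{eq:cartesian_spatial}, an alternative is to take the Fourier transform of the right-hand side of \eqref{eq:as_relu} and match it to \eqref{eq:cartesian_fourier}; this is heavier and gives no extra insight, so I would not pursue it.
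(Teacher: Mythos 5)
Your proposal is correct and rests on the same foundation as the paper's own proof: both reduce \eqref{eq:as_relu} to the spatial formula \eqref{eq:cartesian_spatial} and verify the resulting scalar identity, which the paper does by enumerating the orderings of $x_1$, $x_2$, and $0$ (your ``direct route''). Your slicker verification via $\max(a,b,c)-\min(a,b,c)=\tfrac12(|a-b|+|b-c|+|c-a|)$ together with $\mathrm{ReLU}(u)+\mathrm{ReLU}(v)+\mathrm{ReLU}(w)=\tfrac12(|u|+|v|+|w|)$ when $u+v+w=0$ is a valid and tidier packaging of that same check, but not a genuinely different argument.
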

\begin{proof}
    Let us enumerate all configurations of the inequality between $x_1$ and $x_2$ and $0$. Then, we observe that
\eqref{eq:cartesian_spatial} and \eqref{eq:as_relu} are equal in all cases. 
\end{proof}
Theorem \ref{the:box_spline_as_relu} provides the simplest representation for a two-dimensional box spline (a.k.a finite-element basis) through ReLU networks, up to our knowledge \cite{he2022relu, arora2016understanding}. Combining this theorem with $\ref{eq:search_space_with_phi}$, we conclude that any $s \in \mathcal{V}_{T \M I}$ can be constructed using a ReLU network with two hidden layers and $M = 4N$ neurons in total. Moreover, the interpolation error decays at the same rate as \eqref{eq:error_cartesian} where $T = (0.5\sqrt{M}+1)^2$ now depends on the total number of neurons $M$. One can generalize this result to any box spline $B_{\M \Xi}$ by using $B_{\M \Xi}(\V x) = \varphi(\M \Xi^{-1} \V x)$, and proper handling of the domain $\Omega$.

\section{Conclusion}
We have presented an analysis of the approximation error with continuous and piecewise-linear (CPWL) representations using box splines on two-dimensional grids. By deriving explicit error bounds in terms of the grid parameters, we have shown that hexagonal grids minimize the upper bound of the asymptotic error, which emphasizes their optimality in CPWL-based applications.

\bibliographystyle{IEEEtran}
\bibliography{IEEEabrv,dhtv.bib}

\end{document}